\newtheorem{Theorem}{Theorem}[subsection]
\newtheorem{Definition}[Theorem]{Definition}
\newtheorem{Remark}[Theorem]{Remark}
\newtheorem{Proposition}[Theorem]{Proposition}
\newtheorem{Axiom}[Theorem]{Axiom}
\newtheorem{Example}[Theorem]{Example}
\begin{document}

\title{\textbf{Set Matrix Theory as a Physically Motivated Generalization of Zermelo-Fraenkel Set Theory}}

\author{
        Marcoen J.T.F. Cabbolet$^{1,2,}$\footnote{e-mail: Marcoen.Cabbolet@vub.ac.be}\ , \ Harrie C.M. de Swart$^{3,}$\footnote{e-mail: deswart@fwb.eur.nl}\\
        \\
        \small{\textit{$^1$Center for Logic and Philosophy of Science, Vrije Universiteit Brussel}}\\
        \small{\textit{Pleinlaan 2, 1050 Brussels (Belgium)}}\\
        \small{\textit{$^2$Institute of Theoretical Physics, Kharkov Institute of Physics and Technology}}\\
        \small{\textit{Akademicheskaya str. 1, 61108 Kharkov (Ukraine)}}\\
        \small{\textit{$^3$Faculty of Philosophy, Erasmus University}}\\
        \small{\textit{PO Box 1738, 3000 DR Rotterdam (the Netherlands)}}
        }

\maketitle

\begin{abstract}
Recently, the Elementary Process Theory (EPT) has been developed as a set of fundamental principles that might underlie a gravitational repulsion of matter and antimatter. This paper presents set matrix theory (SMT) as the foundation of the mathematical-logical framework in which the EPT has been formalized: it is, namely, objectionable to use Zermelo-Fraenkel set theory (ZF) as such. SMT is a generalization of ZF: whereas ZF uses only sets as primitive objects, in the framework of SMT finite matrices with set-valued entries are objects \emph{sui generis}, with a $1\times1$ set matrix $[x]$ being identical to the set $x$. It is proved that every set that can be constructed in ZF can also be constructed in SMT: as a mathematical foundation, SMT is thus not weaker than ZF. In addition, it is shown that SMT is more suitable than ZF for the intended application to physics. The conclusion is that SMT, contrary to ZF, is acceptable as the mathematical-logical foundation of the framework for physics that is determined by the EPT.
\end{abstract}

\section{Introduction}
It is a mathematical fact that Zermelo-Fraenkel set theory (ZF) can be used as foundation for virtually all of modern mathematics: this paper is best started by emphasizing that this fact is left \emph{unquestionable} in the remainder of the text -- there is no \emph{mathematical} motivation to replace ZF.



%

Recently, however, the Elementary Process Theory (EPT) has been developed as a formal axiomatic system that can be applied as a foundational framework for physics under the condition that matter and antimatter repulse each other gravitationally, cf. \cite{bib:Cabb, bib:Cabb2, bib:Cabb3}. Currently there is no proof that such repulsion exists, but on the other hand there is also no proof that it doesn`t exist: the AEGIS collaboration at CERN aims to establish the coupling of antimatter with the gravitational field of ordinary matter; results are expected in 2014/2015\footnote{M. Doser, CERN, personal communication (2011).}. The work on the EPT entailed a search for first principles that are consistent with the main consequence of the assumed existence of gravitational repulsion, being -- as shown in \cite{bib:Cabb3} -- that antimatter then necessarily has positive rest mass and negative gravitational mass as observable properties. This is of fundamental interest, as this combination of properties is \emph{absolutely impossible} in the framework of contemporary physics. Thus speaking, if the AEGIS collaboration establishes that antimatter is repulsed by the gravitational field of the earth, then the contemporary foundations of physics are experimentally falsified and a new foundational framework for physics is then required. In broad lines, that is the motivation for the development of the EPT. This work thus implies the position that it is true that experimental successes compel one to accept that Quantum Theory has merit, but that it nevertheless is \emph{not} the final answer regarding the physics of the microcosmos; it also implies the belief that the key lies in gravitational repulsion: if this is a fact of nature, then the most fundamental principles governing the universe are particularly simple -- as the EPT demonstrates.

The point is now that \emph{any} attempt to formalize the EPT within the framework of ZF, that is, with the usual language of mathematics, is liable to objection. The feature of ZF, namely, that everything has to be a set causes \emph{philosophical} -- not \emph{mathematical}! -- difficulties that are both unavoidable and unsolvable within the framework of ZF\footnote{This work is \emph{interdisciplinary}: it finds itself at the intersection of mathematics, physics and philosophy!}. There are then precisely two options to choose from:
\begin{enumerate}[(i)]
\item one can maintain ZF, meaning that one has to swallow that the formalization of the EPT is surrounded with difficulties;
\item one can solve the issues with the formalization of the EPT, meaning that one has to reject ZF on \emph{philosophical} grounds.
\end{enumerate}
There is no metaprinciple that \emph{compels} one to choose either one of these options: it is, thus, a free choice. Obviously, the second option is then the more logical one to choose in the research program on the EPT. This choice entails, thus, the view that mathematics \emph{in the first place} is meant to provide a language for the natural sciences: if the language of mathematics (\emph{in casu} that of ZF) fails in its purpose, then it has to be adjusted. This view corresponds with the adage `\emph{mathematica ancilla physicae}' (mathematics is the servant of physics).

With the above subtle motivation,
it was decided to develop a new foundational theory for mathematics; a condition was that the resulting theory should not be weaker than ZF, that is, every set that can be constructed in ZF must also be constructible in the framework of the new theory. It turned out that it was sufficient to generalize ZF to a theory based on matrices of sets instead of sets alone. For that matter, it was decided to merge the primitive notion of a matrix with axiomatic set theory into a new mathematical theory; the resulting theory was called set matrix theory (SMT). The primitive notion used is that of a $m \times n$ matrix, that can be described as an ordered rectangular object, consisting of $m n$ entries $t_{ij}$ arranged evenly spaced in $m$ rows and $n$ columns within square brackets, as in $\left [ \begin{array}{ccc} t_{11} & \ldots & t_{1n} \\ \vdots &   & \vdots \\ t_{m1} & \ldots & t_{mn} \end{array} \right ]$. In SMT, the entries $t_{ij}$ of matrices are allowed to be $p_{ij} \times q_{ij}$ matrices themselves, but in the end every matrix has to consist of a finite number of simple entries (sets). Axioms were identified for the matrices, as well as for sets. The idea was to describe sets axiomatically in such a way, that matrices could be elements of sets. For that matter, generalizations of the axioms of ZF, given e.g. in \cite{bib:Dalen}, could be used.

The remainder of this paper is organized as follows: section 2 presents the arguments against ZF in detail; section 3 introduces SMT axiomatically, and section 4 demonstrates (i) that SMT is not weaker than ZF as a foundational theory for mathematics, and (ii) that SMT solves the issues with the formalization of the EPT. Conclusions are given in section 5.

\section{The arguments against ZF}
In the aforementioned research that led to the development of the EPT, the assumption that gravitational repulsion exists was not based on empirical data, but on what Descartes called \emph{une id\'{e}e claire et distincte}: an idea that presents itself so clearly and distinctively to the mind that there is no reason to doubt it. This idea cannot be expressed easily in usual language, but its essence is captured in these two sentences:
\begin{enumerate}[(i)]
\item	if a coin has fallen down from one's hand onto a table, then in opposite time-direction an anticoin has fallen upwards from the antitable into the antihand\footnote{For comparison: Feynman's interpretation of a positron (i.e. an anti-electron) is, that a positron is an electron traveling backwards in time, cf. \cite{bib:Feyn}.};
\item	this tendency to `fall upwards' is preserved in antimatter that exists in `our' time-direction.
\end{enumerate}
Therefore, in the world view based on the EPT, the physical universe consists of a world and an antiworld; a component of this universe is simultaneously a constituent of a world and a constituent of an antiworld. Thus, in the EPT matrices of the type $\left [ \begin{array}{c} x \\ y \end{array} \right ]$ with set-valued entries $x$ and $y$ (set matrices) are used as designators of components of the physical universe consisting of a constituent $x$ of the world and a constituent $y$ of the antiworld.

The first complication arises from the truth-condition of knowledge, which is an essential aspect of every theory intended as a foundation for physics. For the EPT, as a formalized theory, to represent knowledge of the physical universe, the condition was set that there had to be a \emph{direct} relation between components of the physical universe and the theoretical terms referring to these components: entities that occur in the ontology for physics had to be designated by entities that occur as such in the ontology for mathematics. A conflict then arises from the fact that in the framework of ZF, a matrix can not be considered as something existing in its own right as a square array of entries, because everything has to be a set. Thus, in the framework of ZF a $m \times n$ set matrix has to be formalized as a set, for example, as a function on the cartesian product $\{1, \ldots, m\}\times\{1, \ldots, n\}$. Thus, a $2\times1$ set matrix $\left [ \begin{array}{c} x \\ y \end{array} \right ]$ can be defined as a function $f$, given by the following function prescription:
\begin{equation}\label{eq:1}
f:\langle 1, 1 \rangle \mapsto x
\end{equation}
\begin{equation}\label{eq:2}
f:\langle 2, 1 \rangle \mapsto y
\end{equation}
Using the set-theoretical definition of a function, this function $f$ as a set is thus given by
\begin{equation}\label{eq:3}
f = \{ \langle \langle 1, 1 \rangle, x \rangle , \langle \langle 2, 1 \rangle, y \rangle \}
\end{equation}
The set-theoretical definition of an ordered two-tuple, cf. \cite{bib:Dalen}, is the following:
\begin{equation}\label{eq:4}
\langle a, b \rangle = \{\{a\}, \{a, b\}\}
\end{equation}
Combining (\ref{eq:3}) and (\ref{eq:4}), this gives
\begin{equation}\label{eq:5}
f = \{\{\{\langle 1, 1 \rangle \}, \{\langle 1, 1 \rangle, x\}\}, \{\{\langle 2, 1 \rangle \}, \{\langle 2, 1 \rangle , y\}\}\}
\end{equation}
Concluding, in the framework of ZF, the $2\times1$ set matrix $\left [ \begin{array}{c} x \\ y \end{array} \right ]$ is thus merely the \emph{notation} for the set $f$ in (\ref{eq:5}): the \emph{actual object} in the set-theoretical universe, namely, is $f$. With regard to the intended application as designators of components of the physical universe, obviously this set $f$ is \emph{not} a direct designator of the physical component in question: the two constituents, designated by $x$ and $y$, are not at all designated by $f$ but by elements of elements of $f$. This complication does not disappear by defining a $2\times1$ set matrix $\left [ \begin{array}{c} x \\ y \end{array} \right ]$ otherwise as some set $S$: it remains the case that it is not the \emph{actual mathematical object} (actual because it exists as such in the universe of sets) that designates the physical object; it is merely the \emph{notation} $\left [ \begin{array}{c} x \\ y \end{array} \right ]$ of the mathematical object $S$ that designates the physical object. Thus, the definition of $2\times1$ set matrices $\left [ \begin{array}{c} x \\ y \end{array} \right ]$ as (notations for) sets leads to mathematical designators that are not in a direct relation with the physical objects they designate. To put this in other words: in the context of the EPT, objects that exist in the physical universe cannot be designated by objects that exist in the mathematical universe if matrices have to be defined as sets. This was considered \emph{inappropriate}; note that this is not a mathematical argument against the definition of matrices as sets.

A second complication arises from the maxim that \emph{every} theorem of the formal axiomatic system containing the EPT has to yield a statement about the physical universe -- which is intended to be true -- by applying the interpretation rules. The point here is that the EPT contains bidirectional expressions of the type $\left [ \begin{array}{c} a \\ b \end{array} \right ]: \left [ \begin{array}{c} f \\ g \end{array} \right ] \begin{array}{c} \rightarrow \\ \leftarrow \end{array} \left [ \begin{array}{c} x \\ y \end{array} \right ]$; these are first-order expressions $P\alpha\beta\gamma$ that had to be formalized as well-formed formulas in a mathematical framework. The interpretation rule\footnote{It is emphasized that the words `component', `constituent' and `discrete transition' in this interpretation rule thus all concern the \emph{physical} universe, not the \emph{mathematical} universe.} for such an expression is that the component $\left [ \begin{array}{c} a \\ b \end{array} \right ]$ mediates an equilibrium between the components $\left [ \begin{array}{c} f \\ g \end{array} \right ]$ and $\left [ \begin{array}{c} x \\ y \end{array} \right ]$, which is to say that the constituent $a$ of the world effects a discrete transition in the world from the constituent $f$ to the constituent $x$ while the constituent $b$ of the antiworld effects a discrete transition in the antiworld from the constituent $y$ to the constituent $g$. In other words, one has to think of two simultaneous but oppositely directed discrete transitions. Now let these bidirectional expressions be formalized in ZF, and let the $2\times1$ set matrices $\left [ \begin{array}{c} a \\ b \end{array} \right ]$, $\left [ \begin{array}{c} f \\ g \end{array} \right ]$, and $\left [ \begin{array}{c} x \\ y \end{array} \right ]$ be identical to the sets $S$, $T$, and $V$, respectively. Using substitutivity of equality
\begin{equation}\label{eq:6}
u = t \vdash \Psi(u) \Leftrightarrow \Psi([t\backslash u])
\end{equation}
it follows that in ZF a formula $S:T \rightleftarrows V$ can be derived, as in
\begin{equation}\label{eq:6a}
\left [ \begin{array}{c} a \\ b \end{array} \right ]: \left [ \begin{array}{c} f \\ g \end{array} \right ] \begin{array}{c} \rightarrow \\ \leftarrow \end{array} \left [ \begin{array}{c} x \\ y \end{array} \right ]
\vdash_{ZF}
S:T \rightleftarrows V
\end{equation}
Thus, if the EPT is formalized in ZF, then expressions $S:T \rightleftarrows V$ are theorems of the axiomatic system containing the EPT, but these \emph{cannot} be translated into statements about physical reality because the interpretation rule doesn't apply to such expressions without $2\times1$ set matrices. In other words: a formalization of the EPT in ZF gives rise to what Redhead called \emph{weakly surplus structure} -- the formalism then contains uninterpretable elements \cite{bib:Redhead}. The aforementioned maxim is then unattainable and this was considered \emph{unacceptable}. If, on the other hand, a $2\times1$ set matrix is defined as an object on itself, not identical to any set, then it is not possible to construct such nonsensical expressions of the type $S:T \rightleftarrows V$ from
these expressions of the type $\left [ \begin{array}{c} a \\ b \end{array} \right ]: \left [ \begin{array}{c} f \\ g \end{array} \right ] \begin{array}{c} \rightarrow \\ \leftarrow \end{array} \left [ \begin{array}{c} x \\ y \end{array} \right ]$.

This concludes the exposition on the arguments against ZF.

\section{Axiomatic introduction of SMT}

\subsection{The language of SMT}

\begin{Definition} \label{Def:Symbols} \rm
The vocabulary for SMT is a first order language with identity, and consists of the following symbols:
\begin{enumerate}[(i)]
\item the simple constant $\emptyset$
\item countably many variables ranging over sets, possibly sets of matrices: \\
$x$, $y$, $z$, $\ldots$
\item countably many function symbols:
\begin{itemize}
\item the unary function symbol $f_{1}^{1}$, with $f^{1}_{1} = f_{1 \times 1}$
\item the first binary function symbol $f^{2}_{1}$ with $f^{2}_{1} = f_{1 \times 2}$
\item the second binary function symbol $f^{2}_{2}$ with $f^{2}_{2}= f_{2 \times 1}$
\item the first ternary function symbol $f^{3}_{1}$ with $f^{3}_{1}= f_{1 \times 3}$
\item the second ternary function symbol, $f^{3}_{2}(x, y, z) = f_{1 \times 2}(f_{1 \times 2}(x, y), z)$
\item the third ternary function symbol, $f^{3}_{3}(x, y, z) = f_{1 \times 2}(x, f_{1 \times 2}(y, z))$
\item the fourth ternary function symbol, $f^{3}_{4}(x, y,z) = f_{1 \times 2}(f_{2 \times 1}(x, y), z)$
\item the fifth ternary function symbol, $f^{3}_{5}(x, y, z) = f_{1 \times 2}(x, f_{2 \times 1}(y, z))$\\
and so forth.
\end{itemize}
\item countably many variables ranging over matrices: $\alpha$, $\beta$, $\gamma$ $\ldots$
\item the binary predicate symbols $\in$ and $=$
\item the usual connectives $\neg$, $\Rightarrow$, $\Leftrightarrow$,
$\wedge$, $\vee$
\item the usual quantifiers $\forall$ and $\exists$
\end{enumerate}
\end{Definition}

\begin{Definition} \label{Def:Syntax} \rm
The syntax of the formal language is defined as follows:
\begin{enumerate}[(i)]
\item if $t$ is a simple constant or a variable ranging over sets, then $t$ is a term;
\item if $t_1, \ldots, t_n$ are $n$ terms and $f^n_i$ is an $n$-ary function symbol, then $f^n_i(t_1, \ldots, t_n)$ is a composite term;
\item if $t_1$ and $t_2$ are terms and $P$ is one of the binary predicate letters $\in$ or =, then $t_1 P t_2$ is an
atomic formula (infix notation);
\item if $\Phi$ is a formula, then $\neg \Phi$ is a formula;
\item if $\Phi$ and $\Psi$ are formulas, then ($\Phi \Rightarrow \Psi$), ($\Phi \wedge \Psi$), ($\Phi \vee \Psi$) are formulas;
\item if $\Phi$ is a formula, $Q$ a quantifier $\exists$ or $\forall$, and $x$ a variable ranging over sets, then $Qx(\Phi)$
is a formula;
\item if $\Phi(x)$ is a formula in which the variable $x$ ranging over sets occurs not bounded by a quantifier, $Q$ is a quantifier,
and $\alpha$ is a variable ranging over matrices, then $Q \alpha (\Phi(\alpha))$ is a formula, where $\Phi(\alpha)$ results from
$\Phi(x)$ by replacing $x$ everywhere by $\alpha$.
\end{enumerate}
\end{Definition}

\begin{Remark} \label{Rem:ListExhaustive}\rm
The list in Definition \ref{Def:Symbols}(iii) of function symbols is exhaustive. That is, for every composite term $t$, constructed by applying the clauses \ref{Def:Syntax}(i) and (ii) finitely many times, there is a term $f^{n}_{i}(x_1, \ldots, x_n)$ of which $x_1, \ldots, x_n$ are interpretable as sets, such that $t = f^{n}_{i}(x_1, \ldots, x_n)$. For example, the composite term $f^{2}_{1}(f^{2}_{1}(x, y), z)$ can also be written as the term $f^{3}_{2}(x, y, z)$. This exhaustive enumeration of function symbols is very useful for the formulation of the axioms.
\end{Remark}

\begin{Remark} \label{Rem:Notation} \rm
The following are standard notations for terms (set matrices) and formulas:
\begin{enumerate}[(i)]
\item outer parentheses `(' and `)' can be omitted.
\item $t_1 \not \in t_2$ denotes $\neg$ $t_1 \in t_2$
\item $t_1 \neq t_2$ denotes $\neg$ $t_1 = t_2$
\item $[ x ]$ denotes $f_{1 \times 1}(x)$
\item $[ x \ \ y]$ denotes $f_{1 \times 2}(x, y)$
\item $\left [ \begin{array}{c} x \\ y \end{array} \right ]$ denotes $f_{2 \times 1}(x, y)$
\item $[ x \ \ y \ \ z]$ denotes $f_{1 \times 3}(x, y, z)$
\end{enumerate}
and so forth.
\end{Remark}

\begin{Remark} \label{Rem:SubstitutionRule} \rm
The following substitution rule is logically valid for the variables ranging over matrices:
\begin{equation}\label{eq:7}
\forall \alpha (\Psi (\alpha)) \Rightarrow \Psi (f^{n}_{i}(x_1, \ldots, x_n))
\end{equation}
The substitution rule (\ref{eq:7}) applies to any function symbol $f^{n}_{i}$ and any $n$ sets $x_1, \ldots, x_n$; \linebreak here the formula $\Psi (f^{n}_{i}(x_1, \ldots, x_n))$ is the formula that results from $\Psi (\alpha)$ by replacing $\alpha$ everywhere by $f^{n}_{i}(x_1, \ldots, x_n)$. Note that the variables $\alpha$ ranging over matrices occur only in quantified formulas; using such variables in open formulas is not needed for the present axiomatization (Ockham's razor).
\end{Remark}

\subsection{The axioms of SMT}

\begin{Axiom}\label{Ax:SetMatrixScheme}(Set Matrix Axiom Scheme)\rm:\\
$\forall x_{1} \ldots \forall x_{n} \exists \alpha ( \alpha = f^{n}_{i}(x_{1}, \ldots, x_{n}) )$
\end{Axiom}\noindent
The Set Matrix Axiom Scheme is a countably infinite scheme, consisting of an axiom for every function symbol $f^{n}_{i}$. This axiom scheme guarantees that for any $m\cdot n$ sets $x_{11}, \ldots, x_{mn}$ there is a set matrix $\left [ \begin{array}{ccc} x_{11} & \ldots & x_{1n} \\ \vdots &   & \vdots \\ x_{m1} & \ldots & x_{mn} \end{array} \right ]$ with these sets as entries; in addition, it guarantees that for any $m\cdot n$ set matrices $\alpha_{11}, \ldots, \alpha_{mn}$ there is a set matrix $\left [ \begin{array}{ccc} \alpha_{11} & \ldots & \alpha_{1n} \\ \vdots &   & \vdots \\ \alpha_{m1} & \ldots & \alpha_{mn} \end{array} \right ]$ with these set matrices as entries.

\begin{Axiom}\label{Ax:Reduction}(Reduction Axiom)\rm:\\
$\forall x ( [x] = x )$
\end{Axiom}\noindent
The purpose of the Reduction Axiom is to equate set matrices having one set as sole entry with that set itself. Hence, any set $x$ is identical to the set matrix $[x]$ of dimension one by one. For example, the empty set $\emptyset$ is identical to the one by one set matrix $[\emptyset]$ containing the empty set $\emptyset$ as sole entry.

\begin{Axiom}\label{Ax:OmissionScheme}(Omission Axiom Scheme)\rm:\\
$\forall \alpha_{11} \ldots \forall \alpha_{mn} (\left [ \left [ \begin{array}{ccc} \alpha_{11} & \ldots & \alpha_{1n} \\ \vdots &   & \vdots \\ \alpha_{m1} & \ldots & \alpha_{mn} \end{array} \right ] \right ] = \left [ \begin{array}{ccc} \alpha_{11} & \ldots & \alpha_{1n} \\ \vdots &   & \vdots \\ \alpha_{m1} & \ldots & \alpha_{mn} \end{array} \right ] \ \ \ m\cdot n \geq 2$
\end{Axiom}\noindent
The Omission Axiom Scheme is a countably infinite scheme consisting of an axiom for every function symbol $f_{m \times n}$ with $m \cdot n \geq 2$. The Omission Axiom Scheme is to formalize that a matrix $\left [ \left [ \begin{array}{ccc} t_{11} & \ldots & t_{1n} \\ \vdots &   & \vdots \\ t_{m1} & \ldots & t_{mn} \end{array} \right ] \right ]$, constructed by placing an existing matrix $\left [ \begin{array}{ccc} t_{11} & \ldots & t_{1n} \\ \vdots &   & \vdots \\ t_{m1} & \ldots & t_{mn} \end{array} \right ]$ as sole entry in square brackets `[' and `]', is identical to the existing matrix $\left [ \begin{array}{ccc} t_{11} & \ldots & t_{1n} \\ \vdots &   & \vdots \\ t_{m1} & \ldots & t_{mn} \end{array} \right ]$. It should be noted that this includes the case that the entries $t_{ij}$ are sets: the Reduction Axiom, namely, equates a $1 \times 1$ set matrix $[z]$ with the set $z$. The notion of a matrix is thus different from the notion of a set, where $\{x\} \neq x$ for any set $x$.

\begin{Axiom}\label{Ax:EpsilonScheme}(Epsilon Axiom Scheme)\rm:\\
$\forall \alpha_{11} \ldots \forall \alpha_{mn}\forall \beta_{11} \ldots \forall \beta_{pq} ( \left [ \begin{array}{ccc} \alpha_{11} & \ldots & \alpha_{1n} \\ \vdots &   & \vdots \\ \alpha_{m1} & \ldots & \alpha_{mn} \end{array} \right ]  \not \in \left [ \begin{array}{ccc} \beta_{11} & \ldots & \beta_{1q} \\ \vdots &   & \vdots \\ \beta_{p1} & \ldots & \beta_{pq} \end{array} \right ]) \ \ \ p\cdot q \geq 2$
\end{Axiom}\noindent
The Epsilon Axiom Scheme is a countably infinite axiom scheme consisting of an axiom for every function symbol $f_{m \times n}$  and for every function symbol $f_{p \times q}$ with $p\cdot q \geq 2$.
\begin{Axiom}\label{Ax:DivisionScheme}(Division Axiom Scheme)\rm:\\
$\forall x_1 \ldots \forall x_n \forall y_1 \ldots \forall y_m (f^{n}_{i} (x_1 , \ldots , x_n) \neq f^{m}_{j}(y_1, \ldots , y_m)) \
\ \ for \ n \neq m \vee i \neq j$
\end{Axiom}\noindent
The Division Axiom Scheme is a countably infinite axiom scheme, consisting of an axiom for every choice of different function symbols $f^{n}_{i}$ and $f^{m}_{j}$.

\begin{Remark} \rm
The Epsilon Axiom Scheme together with the Division Axiom Scheme formalize that
\begin{enumerate}[(i)]
\item set matrices, consisting of more than one set, have no elements in the sense of the $\in$-relation;
\item set matrices, consisting of more than one set, are different from any set.
\end{enumerate}
As a consequence, set matrices of other dimensions than $1\times 1$ are objects \emph{sui generis}: contrary to the universe of ZF, the universe of SMT contains, thus, \emph{objects that are not sets}. It will be shown in section 4.2 that this is essential for solving the problems with the formulation of the EPT. So concretely, for any sets $x$ and $y$, the set matrix $ \left [\begin{array} {c} x \\ y \end{array} \right ]$ is different from any $1 \times 1$ set matrix $[ z ]$, which by the Reduction Axiom \ref{Ax:Reduction} is identical to the set $z$.  So concerning the $\in$-relation, only expressions of the type
\begin{equation}
f^{n}_{i}(x_{1}, \ldots, x_{n}) \in y
\end{equation}
with a set $y$ to the right of the $\in$-symbol are contingent.
\end{Remark}

\begin{Remark} \rm
Because matrices are viewed as objects existing in their own right in the framework of SMT, the common notation $\langle x_1, \ldots , x_n \rangle$ for an ordered $n$-tuple of sets can be applied as a special notation for a $1 \times n$ set matrix:
\begin{equation} \label{eq:9}
\langle x_1, \ldots , x_n \rangle := [ x_1 \ \ldots \ x_n ]
\end{equation}
But by the Division Axiom Scheme, for any three sets $x_1$, $x_2$, and $x_3$,
\begin{equation}
[ x_1 \ \ x_2 \ \ x_3] \neq [ [ x_1 \ \ x_2 ] \ \ x_3 ]
\end{equation}
Therefore, accepting a $1 \times n$ matrix with $n$ sets as entries as the definition of an ordered $n$-tuple requires the rejection of the recursive definition of an ordered $n$-tuple of sets $\langle x_1,\ldots , x_n \rangle$
\begin{equation}
\langle x \rangle := x
\end{equation}
\begin{equation}
\langle x_1, \ldots , x_{n+1} \rangle := \langle \langle x_1, \ldots , x_n \rangle , x_{n+1} \rangle
\end{equation}
given in the literature, cf. \cite{bib:Dalen}. In the remainder of this text, equation (\ref{eq:9}) will be used.
\end{Remark}
\begin{Axiom} \label{Ax:ExtensionalityMatrices} (Extensionality Axiom Scheme for Set Matrices)\rm:\\
\begin{flushleft}
$\forall \alpha_{11} \ldots \forall \alpha_{mn}\forall \beta_{11} \ldots \forall \beta_{mn} ( \left [ \begin{array}{ccc} \alpha_{11} & \ldots & \alpha_{1n} \\ \vdots &   & \vdots \\ \alpha_{m1} & \ldots & \alpha_{mn} \end{array} \right ]  = \left [ \begin{array}{ccc} \beta_{11} & \ldots & \beta_{1n} \\ \vdots &   & \vdots \\ \beta_{m1} & \ldots & \beta_{mn} \end{array} \right ] \Rightarrow$
\end{flushleft}
\begin{flushright}
$\alpha_{11} = \beta_{11} \wedge \ldots \wedge \alpha_{mn} = \beta_{mn})$
\end{flushright}
\end{Axiom}\noindent
The Extensionality Axiom Scheme for Set Matrices is a countably infinite scheme, with an axiom for every functional symbol $f_{m \times n}$ with $m\cdot n > 1$. The purpose of the Extensionality Axiom Scheme for Set Matrices is to formalize that two set matrices of the same type are identical if and only if the corresponding entries are identical: this reduces the
identity of matrices to a conjunction of identities of sets.

\begin{Definition} \label{Def:Subset}
$\\ \forall x \forall y ( x \subseteq y \Leftrightarrow \forall \alpha ( \alpha \in x \Rightarrow \alpha \in y))$
\end{Definition}\noindent
The interpretation of this definition is that a set $x$ is a subset of a set $y$  if and only if every matrix that is an element of $x$ is also an element of $y$. Note that this includes the case that $x$ and $y$ are sets of sets: quantification over matrices includes quantification over sets (\emph{vide supra}).

\begin{Axiom} \label{Ax:Exp-ExtensionalitySets} (Generalized Extensionality Axiom for Sets)\rm: \\
$\forall x \forall y ( x = y \Leftrightarrow x \subseteq y \wedge y \subseteq x ) $
\end{Axiom}\noindent
The Generalized Extensionality Axiom for Sets is not exactly the same as the extensionality axiom for sets of ZF, because the definition of $x \subseteq y$ is different in the current framework.

\begin{Axiom} \label{Ax:Exp-Emptiness} (Generalized Axiom of Emptiness)\rm: \\
$\exists x \forall \alpha (\alpha \not \in x)$
\end{Axiom}\noindent
This axiom formalizes that there is a set $x$, such that no matrix is an element of $x$. Suppose, there were two such sets $x$ and $y$.
Then by axiom \ref{Ax:Exp-ExtensionalitySets}, $x = y$. Hence there is precisely one set $x$ such that $\forall \alpha ( \alpha \not \in x)$. This set is called the empty set, denoted by the
constant $\emptyset$ or $\{ \}$, which by the Reduction Axiom \ref{Ax:Reduction} is identical to $[\emptyset]$ or $[\{\}]$.

\begin{Remark} \label{Rem:MatricesNotSets} \rm
As remarked earlier, for all $n > 1$, set matrices $f^n_i(x_1, \ldots, x_n)$ have no elements on account of the Epsilon Axiom Scheme \ref{Ax:EpsilonScheme}: this is thus a property they share with the empty set. These set matrices $f^n_i(x_1, \ldots, x_n)$ with $n > 1$ are, however, not identical to the empty set $\emptyset$ on account of the Division Axiom Scheme \ref{Ax:DivisionScheme}: these set matrices are objects that are not sets.
\end{Remark}

\begin{Axiom} \label{Ax:Exp-SeparationScheme} (Generalized Axiom Scheme of Separation)\rm: \\
$\forall x \exists y \forall \alpha ( \alpha \in y \Leftrightarrow \alpha \in x \wedge \Phi ( \alpha))$
\end{Axiom}\noindent
This axiom scheme formalizes that for every set $x$ and for every property $\Phi$ there is a subset $y$ of $x$ made up of precisely those elements of $x$ that have the property $\Phi$. Hereby the symbol $\Phi(\alpha)$ represents any well-formed formula with an occurrence of $\alpha$ not bounded by a quantifier $\forall$ or $\exists$. The fact that every well-formed formula has to be finite implies that such a property $\Phi(\alpha)$ can contain only finitely many function symbols $f^n_i$, of which there are infinitely many. And
this implies, that for infinite sets, having elements $f^n_i(x_1, \ldots, x_n)$ for an infinite number of function symbols $f^n_i$, certain properties can not be formulated in a single formula $\Phi$. Of such sets, certain subsets, also having elements $f^n_i(x_1,\ldots, x_n)$ for an infinite number of function symbols $f^n_i$, can thus not be singled out directly by applying the Generalized Axiom Scheme of Separation only once. In section 4.3 this is further elaborated.

\begin{Axiom} \label{Ax:Exp-Pair} (Generalized Pair Axiom)\rm:\\
$\forall \alpha \forall \beta \exists x \forall \gamma (\gamma \in x \Leftrightarrow \gamma = \alpha \vee \gamma = \beta)$
\end{Axiom}\noindent
The Generalized Pair Axiom formalizes that for every pair of matrices $\alpha$ and $\beta$  there is a set $x$ such that the matrices $\alpha$ and $\beta$ are precisely the elements of $x$. From the Generalized Extensionality Axiom for Sets \ref{Ax:Exp-ExtensionalitySets} it follows that this set $x$ is unique, and it can be denoted by $x = \{ \alpha, \beta \}$. Because $1 \times 1$ set matrices $[y]$ and $[z]$ can be taken as value for the matrices $\alpha$ and $\beta$, it follows from the Reduction Axiom \ref{Ax:Reduction} that the Generalized Pair Axiom applies to sets $y$ and $z$, yielding $x = \{ y , z \}$.
\begin{Axiom} \label{Ax:SetOfMatrices} (Set of Matrices Axiom Scheme)\rm:
\begin{flushleft}
$\forall x \exists y (
\forall \alpha_{11} \ldots \forall \alpha_{mn}(\alpha_{11} \in x \wedge \ldots \wedge \alpha_{mn} \in x \Rightarrow
\left [ \begin{array}{ccc} \alpha_{11} & \ldots & \alpha_{1n} \\ \vdots &   & \vdots \\ \alpha_{m1} & \ldots & \alpha_{mn} \end{array} \right ] \in y)
\wedge$
\end{flushleft}
\begin{flushright}
$\forall \beta (
\beta \in y \Rightarrow
\exists \gamma_{11} \ldots \exists \gamma_{mn}
(\beta = \left [ \begin{array}{ccc} \gamma_{11} & \ldots & \gamma_{1n} \\ \vdots &   & \vdots \\ \gamma_{m1} & \ldots & \gamma_{mn} \end{array} \right ] \wedge
\gamma_{11} \in x \wedge \ldots \wedge \gamma_{mn} \in x)))$
\end{flushright}
\end{Axiom}\noindent
The Set of Matrices Axiom Scheme is a countably infinite axiom scheme, with an axiom for every function symbol $f_{m \times n}$. On the one hand, every such axiom guarantees that for every $mn$ elements $\alpha_{11}, \ldots, \alpha_{mn}$ of $x$ there is a matrix $\left [ \begin{array}{ccc} \alpha_{11} & \ldots & \alpha_{1n} \\ \vdots &   & \vdots \\ \alpha_{m1} & \ldots & \alpha_{mn} \end{array} \right ]$ in $y$; on the other hand, it guarantees that there are \emph{no other} elements in $y$ since for every element $\beta$ of $y$ there have to be $mn$ elements $\gamma_{11}, \ldots, \gamma_{mn}$ in $x$ such that $\beta = \left [ \begin{array}{ccc} \gamma_{11} & \ldots & \gamma_{1n} \\ \vdots &   & \vdots \\ \gamma_{m1} & \ldots & \gamma_{mn} \end{array} \right ]$. In every such axiom the set $y$ occurring in it is unique, and can be denoted $M_{m \times n}(x)$, the set of all $m \times n$ matrices with elements of $x$ as entries.

\begin{Axiom} \label{Ax:Exp-SumSet}(Generalized Sum Set Axiom)\rm: \\
$\forall x ( \forall \alpha (\alpha \in x \Rightarrow \exists u (u = \alpha )) \Rightarrow \exists y \forall \beta (\beta \in y \Leftrightarrow \exists z (z \in x \wedge \beta \in z)))$
\end{Axiom}\noindent
The Generalized Sum Set Axiom formalizes that for every set of sets $x$ there is a set $y$ made up precisely of the elements of the sets that are in $x$. Universal quantification over sets of sets is achieved by the restricted quantification $\forall x ( \forall \alpha (\alpha \in x \Rightarrow \exists u (u = \alpha ))\Rightarrow$, because then only sets $x$ are considered such that for every matrix $\alpha$ in $x$ there is a set $u$ identical to that matrix $\alpha$. From the Generalized Extensionality Axiom for Sets \ref{Ax:Exp-ExtensionalitySets} it follows that the set $y$ is unique, and it can be denoted by $y = \bigcup x$.

As an example, consider the set $S$ given by
\begin{equation}
S = \{ \{ \emptyset, [ \emptyset \ \ \emptyset  ] \}, \{ \left [\begin{array} {c} \emptyset \\ \emptyset \end{array} \right ] \} \}
\end{equation}
The set $\bigcup S$ is then the set that contains the elements of the sets $\{ \emptyset, [ \emptyset \ \ \emptyset  ] \}$ and $\{\left [\begin{array} {c} \emptyset \\ \emptyset \end{array} \right ] \}$ in S, and is by axiom \ref{Ax:Exp-SumSet} thus given by:
\begin{equation} \label{eq:14}
\bigcup S = \{\emptyset, [ \emptyset \ \ \emptyset  ], \left [\begin{array} {c} \emptyset \\ \emptyset \end{array} \right ] \}
\end{equation}
This set $\bigcup S$, however, is no longer a set of sets, cf. remark \ref{Rem:MatricesNotSets}. Therefore, it does not follow from the Generalized Sum Axiom that there is a set $\bigcup (\bigcup  S)$. If the restricted quantification in axiom \ref{Ax:Exp-SumSet} would be replaced by a quantification $\forall x$ over sets, then one would have $\bigcup ( \bigcup  S) = \emptyset$ because none of the objects in the set $\bigcup S$ of equation (\ref{eq:14}) has any elements. However, in this axiomatization we have chosen for restricted quantification, because it makes no sense to talk about collecting elements of set matrices, of which by axiom \ref{Ax:EpsilonScheme} is already known that they can't have elements in the sense of the $\in$-relation.

\begin{Remark} \label{Rem:CartesianProduct} \rm
At this point the cartesian product $x \times y$ of two sets $x$ and
$y$ can be introduced:
\begin{equation}
\forall \alpha ( \alpha \in x \times y \Leftrightarrow \exists \beta \exists \gamma ( \beta \in x \wedge \gamma \in y \wedge \alpha = [\beta \ \ \gamma ] ) )
\end{equation}
The set $x \times y$ can then be denoted by $\{[\beta \ \ \gamma ] \ ; \ \beta \in x \wedge \gamma \in y \}$. The existence of the set $x \times y$ is guaranteed by the previous axioms of SMT. Namely, for any two sets $x$ and $y$ the set $\{x, y\}$ exists on account of the Generalized Pair Axiom. The union $x \cup y$ of the sets $x$ and $y$ then exists on account of the Generalized Sum Set Axiom:
\begin{equation}
x \cup y = \bigcup \{x, y \}
\end{equation}
The set $M_{1\times2}(x \cup y)$ then exists on account of the Set of Matrices Axiom Scheme:
\begin{equation}
\begin{array}{lcr}
\forall\alpha \forall \beta (
\alpha \in x \cup y \wedge \beta \in x \cup y \Rightarrow [\alpha \ \ \beta] \in M_{1\times2}(x \cup y))
\wedge \\
\ \ \ \forall \gamma (
\gamma \in M_{1\times2}(x \cup y) \Rightarrow \exists \mu \exists \nu (
\gamma = [\mu \ \ \nu] \wedge \mu \in x \cup y \wedge \nu \in x \cup y))
\end{array}
\end{equation}
The set $x \times y$ then exists on account of the Generalized Axiom Scheme of Separation:
\begin{equation}
\forall\alpha(\alpha \in x \times y \Leftrightarrow \alpha \in M_{1\times2}(x \cup y) \wedge \exists\beta\exists\gamma(\alpha = [\beta \ \ \gamma] \wedge \beta \in x \wedge \gamma \in y))
\end{equation}
It should be noted that because of the Division Axiom Scheme, the sets $x \times y \times z$, $(x \times y) \times z$, and $x \times (y \times z)$, are three mutually different sets; in the framework of ZF, these are also three mutually different sets.
\end{Remark}

\begin{Axiom} \label{Ax:Exp-PowerSet} (Generalized Power Set Axiom)\rm:\\
$\forall x \exists y ( \forall \alpha ( \alpha \in y \Rightarrow \exists u (u = \alpha)) \wedge \forall z (z \in y \Leftrightarrow z \subseteq x))$
\end{Axiom}\noindent
The Generalized Power Set Axiom says that for every set $x$ there is a set of sets $y$ that is made up precisely of the subsets of $x$. This set $y$ is unique on account of the extensionality axiom for sets; notation: \mbox{$y = POW(x)$}. Existential quantification over sets of sets is achieved by the
restricted quantification $\exists y ( \forall \alpha ( \alpha \in y \Rightarrow \exists u (u = \alpha)) \wedge \ldots$

\begin{Definition}\label{Def:SuccessorSet}\rm {\ }\\
Given any set $x$, there is a unique successor set with standard notation $\{x\}$ defined by:\\ $\forall\alpha (\alpha \in \{x\} \Leftrightarrow \alpha = x)$. \\
\end{Definition}\noindent
This definition defines for every set $x$ a singleton $\{x\}$, that has the set $x$ as sole element. The singleton $\{x\}$ is unique on account of the Generalized Extensionality Axiom for Sets \ref{Ax:Exp-ExtensionalitySets}, and its existence is guaranteed on account of the Generalized Pair Axiom \ref{Ax:Exp-Pair}. In the literature, the successor set $\{x\}$ is also denoted by $(x)^+$.

\begin{Axiom} \label{Ax:Exp-Infinity} (Generalized Axiom of Countable Infinity)\rm: \\
$\exists x (\emptyset \in x \wedge \forall y (y \in x \Rightarrow \{y \} \in x))$
\end{Axiom}\noindent

\begin{Remark} \rm
Starting with the empty set $\emptyset$, this axiom thus guarantees
the existence of an infinite set $N$, defined by
\begin{equation} \label{eq:16}
N := \{ \emptyset, \{\emptyset\}, \{\{\emptyset\}\}, \ldots \}
\end{equation}
The elements of $N$ can then be numbered, using $0 := \emptyset$ and $n+1 := \{n\}$, yielding the set of natural numbers $\{0, 1, 2, \ldots \}$. It should be noted that the infinite set $N$, given by (\ref{eq:16}), is not the only infinite set that satisfies the Generalized Axiom of Countable Infinity. For example, the set $\bigcup \{N, \{ \left [ \begin{array} {c} \emptyset \\ \emptyset \end{array} \right ] \} \}$ also satisfies axiom \ref{Ax:Exp-Infinity}. In the framework of ZF, there is also more than one set that satisfies ZF's axiom of infinity.
\end{Remark}

\begin{Axiom} \label{Ax:Exp-SubstitutionScheme} (Generalized Substitution Axiom Scheme)\rm: \\
$\forall x (\forall \alpha(\alpha \in x \Rightarrow \exists!\beta [\Phi(\alpha, \beta)]) \Rightarrow \exists y (\forall \beta (\beta\in y \Leftrightarrow \exists \gamma (\gamma \in x \wedge
\Phi(\gamma, \beta)))))$
\end{Axiom}\noindent
For any set $x$, this axiom formalizes that if every matrix $\alpha$ in $x$ is related to precisely one matrix $\beta$, then there is a set $y$ made up of precisely those matrices $\beta$ that are in relation $\Phi(\gamma, \beta)$ with some matrix $\gamma$ in $x$. The axiom is applicable for any well-formed formula $\Phi(\alpha,\beta)$ that relates every matrix in $x$ with precisely one matrix $\beta$. What has been said about separation applies also here:
the fact that every well-formed formula has to be finite implies that such a formula $\Phi(\alpha, \beta)$ can contain only finitely many function symbols $f^n_i$, of which there are infinitely many. This means that for infinite sets, made up of set matrices $f^n_i(x_1, \ldots, x_n)$ for an infinite number of function symbols $f^n_i$, some
``relations'' would require an infinitely long formula $\Phi(\gamma, \beta)$: the Generalized Substitution Axiom Scheme is then not directly applicable. In section 4.3 this is further elaborated.

\begin{Remark} \label{Rem:FunctionSpace} \rm
At this point the space $y^x$ of all functions from a set $x$ to a
set $y$ can be introduced:\\
$\forall \alpha (\alpha \in y^x \Leftrightarrow \exists f (f := \alpha \wedge f \subseteq x \times y \wedge \forall \beta (\beta \in x \Rightarrow
\exists! \gamma (\gamma \in y \wedge [\beta \ \ \gamma] \in f))))$\\
The set $y^x$ is a subset of the power set of $x \times y$, so that its existence is guaranteed by the Generalized Axiom Scheme of Separation.
\end{Remark}

\begin{Axiom} \label{Ax:Exp-Foundational} (Generalized Foundational Axiom)\rm: \\
$\forall x ( \exists \alpha (\alpha \in x) \wedge \forall \beta (\beta \in x \Rightarrow \exists y (\beta = y)) \Rightarrow
\exists z (z \in x \wedge \forall \gamma (\gamma \in z \Rightarrow \gamma\not \in x)))$
\end{Axiom}\noindent
The Generalized Foundational Axiom formalizes that every nonempty set $x$ of sets has an element $z$, which shares no elements with $x$. This axiom excludes in particular that there is a set $x$ such that $x = \{x\}$.

\begin{Remark}\rm
With the above axiomatization of SMT, there is for every axiom (or axiom scheme) of ZF a corresponding generalized axiom in SMT.
\end{Remark}

\subsection{About the philosophy of mathematics}

From the fact that ZF is adequate as a foundation for virtually all of modern mathematics derives the most widely accepted point of view on what mathematics is: mathematics may be viewed as the body of statements, that can be derived within ZF by means of logical reasoning. Corresponding with this view is the adage `\emph{everything is a set}': every term of every statement is a set -- there are no other terms. Having defined the framework of SMT, the general philosophy of what mathematics is may then be distilled from this view: mathematics is the body of statements that can be derived within the framework of SMT by means of formal deduction.

In the framework of SMT, besides sets also set matrices occur as terms of the mathematical language. These set matrices are in general not sets, so that the adage `\emph{everything is a set}' of ZF is certainly not valid in the framework of SMT. However, because of the Reduction Axiom \ref{Ax:Reduction}, in the framework of SMT every set $x$ is identical to a $1 \times 1$ set matrix $[x]$. As a result, the adage `\emph{everything is a matrix}' holds in the framework of SMT. Concerning the terms of the language a nominalist position is taken, in the sense that these terms (sets and matrices of sets) in themselves are without any fundament in physical reality -- that is, there is no Platonian domain in reality that \emph{is} the universe of SMT.

Furthermore, given that the motivation for the development of SMT lies in physics, the point of view on the position of mathematics in the whole of science is reflected by the adage `\emph{mathematica ancilla physicae}'. This implies the view that mathematics in the first place is meant to provide a language for the natural sciences.

\section{Discussion}
\subsection{The relation between SMT and ZF}
\begin{Remark}\rm
An undeniable observation is that the language $\rm L_{\it ZF}$ of ZF is properly contained in the language $\rm L_{\it SMT}$ of SMT.  Now let L' be the restriction of the language $\rm L_{\it SMT}$ by leaving out the function symbols $f^n_i$  with $n > 1$, and let $\rm SMT|L'$ be the restriction of SMT to the language L'.  Thus speaking, the only function symbol in L' is $f^1_1$  with $f^1_1(x) = [x]$, and '=' and '$\in$' are the only predicate letters in L', so that the Omission Axiom Scheme \ref{Ax:OmissionScheme}, the Epsilon Axiom Scheme \ref{Ax:EpsilonScheme}, the Division Axiom Scheme \ref{Ax:DivisionScheme}, and the Extensionality Axiom Scheme for Set Matrices \ref{Ax:ExtensionalityMatrices} do not occur in $\rm SMT|L'$.
\end{Remark}

\begin{Theorem} \label{Th:SetsAreMatrices} {\ }
$\vdash _{\rm SMT|L'} \forall x \exists! \alpha (\alpha = [x]) \wedge \forall \alpha \exists! x (x = \alpha)$
\end{Theorem}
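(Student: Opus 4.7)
My plan is to split the theorem into its two conjuncts, which together assert that sets and matrices are in a canonical one-to-one correspondence in the restricted language L'. The first conjunct falls out of the Set Matrix Axiom Scheme, while the second conjunct is where the restriction to L' does real work, via the Reduction Axiom.

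For the first conjunct $\forall x \exists! \alpha (\alpha = [x])$, I would apply the Set Matrix Axiom Scheme (Axiom \ref{Ax:SetMatrixScheme}) instantiated at $n=1$, $i=1$, which immediately yields $\forall x \exists \alpha (\alpha = f^{1}_{1}(x))$, i.e.\ $\forall x \exists \alpha (\alpha = [x])$ in the abbreviated notation of Remark \ref{Rem:Notation}. Uniqueness of such $\alpha$ is an immediate consequence of transitivity of equality: from $\alpha_1 = [x]$ and $\alpha_2 = [x]$ one infers $\alpha_1 = \alpha_2$.

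For the second conjunct $\forall \alpha \exists! x (x = \alpha)$, the crucial observation is that in L' the sole function symbol capable of producing a matrix term is $f^{1}_{1}$. I would first establish the per-term version: for every set $y$, $\exists! x (x = [y])$. Existence follows from the Reduction Axiom \ref{Ax:Reduction}, which gives $[y] = y$, so the choice $x := y$ witnesses the claim; uniqueness is again by transitivity. To pass from this to the matrix-quantified statement $\forall \alpha \exists! x (x = \alpha)$, I would invoke the $\forall$-introduction rule for matrix variables that is dual to the substitution rule of Remark \ref{Rem:SubstitutionRule}: since $f^{1}_{1}$ is the only matrix-constructor available in L', a formula $\Psi(\alpha)$ proven for every $[y]$ with $y$ a fresh set variable extends to $\forall \alpha \Psi(\alpha)$.

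The main obstacle is justifying this $\forall$-introduction step for matrix variables. The paper explicitly states only the substitution rule, which is $\forall$-elimination for matrix quantifiers; the dual introduction rule needed here is not displayed, but it is implicit in the substitutional reading of matrix quantifiers combined with the fact that in L' the only matrix terms generated by the syntax clauses of Definition \ref{Def:Syntax} are of the form $[y]$. Once this metalogical step is granted, the theorem is an essentially immediate consequence of the Set Matrix Axiom Scheme together with the Reduction Axiom, with transitivity of equality handling both uniqueness clauses.
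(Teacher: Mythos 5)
Your proposal is correct and follows essentially the same route as the paper: the first conjunct from the $f^1_1$-instance of the Set Matrix Axiom Scheme, the second from the Reduction Axiom together with the observation that in L$'$ the matrix variables range only over terms of the form $[y]$ (what the paper calls its ``completeness argument''), and uniqueness in both cases from the symmetry and transitivity of identity. Your explicit flagging of the needed $\forall$-introduction rule for matrix variables is a fair point of rigor that the paper leaves implicit, but it is the same argument.
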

\begin{proof}
In $\rm SMT|L'$ the Set Matrix Axiom Scheme \ref{Ax:SetMatrixScheme} is reduced to
\begin{equation}\label{eq:21}
\forall x \exists \alpha (\alpha = [x])
\end{equation}
Because of the absence of the other function symbols $f^n_i$ in $\rm SMT|L'$, the variables ranging over matrices only range over these $1 \times 1$ set matrices $[x]$. Using this completeness argument and the Reduction Axiom it then follows that
\begin{equation}\label{eq:22}
\vdash _{\rm SMT|L'} \forall \alpha \exists x (x = \alpha)
\end{equation}
Uniqueness in (\ref{eq:21}) and (\ref{eq:22}) follows from symmetry and transitivity of the identity relation.
\end{proof}

\begin{Theorem}\label{Th:Quantification} {\ }
$\vdash _{\rm SMT|L'} \forall \alpha (\Phi(\alpha)) \Leftrightarrow \forall x (\Phi(x))$
\end{Theorem}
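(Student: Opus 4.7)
The plan is to establish both directions of the biconditional by exploiting the fact that in $\rm SMT|L'$, sets and matrices are in a definable bijection given by Theorem \ref{Th:SetsAreMatrices}. That theorem guarantees that every set $x$ corresponds to a unique matrix $[x]$ and, conversely, every matrix $\alpha$ corresponds to a unique set $x$ with $x = \alpha$. The Reduction Axiom \ref{Ax:Reduction} supplies the identity $[x] = x$ that glues these two perspectives together, so each direction should reduce to one application of substitutivity of equality.

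For the forward direction, I would assume $\forall \alpha (\Phi(\alpha))$ and pick an arbitrary set $x$. The substitution rule of Remark \ref{Rem:SubstitutionRule} specialized to the only remaining function symbol $f^1_1$ yields $\Phi([x])$, since in $\rm SMT|L'$ the formula scheme collapses to $\forall \alpha (\Phi(\alpha)) \Rightarrow \Phi([x])$. Then the Reduction Axiom \ref{Ax:Reduction} gives $[x] = x$, and substitutivity of equality turns $\Phi([x])$ into $\Phi(x)$. Universal generalization on $x$ produces $\forall x (\Phi(x))$.

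For the reverse direction, I would assume $\forall x (\Phi(x))$ and pick an arbitrary matrix $\alpha$. Theorem \ref{Th:SetsAreMatrices} provides a (unique) set $x$ with $x = \alpha$. Instantiating the hypothesis at this $x$ gives $\Phi(x)$, and substitutivity of equality applied to $x = \alpha$ turns $\Phi(x)$ into $\Phi(\alpha)$. Universal generalization on $\alpha$ yields $\forall \alpha (\Phi(\alpha))$.

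The only subtle point, and hence the main thing to be careful about, is that the two quantifiers in the statement range over two different sorts (sets versus matrices), so the deduction cannot be a simple renaming of bound variables; it genuinely uses the content of Theorem \ref{Th:SetsAreMatrices} (that the matrix sort is exhausted by the $1\times 1$ matrices in $\rm SMT|L'$) together with the Reduction Axiom. Once that sortal bridge is in place, the argument is a routine two-line application of substitutivity in each direction.
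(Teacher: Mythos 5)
Your proof is correct and follows essentially the same route as the paper: the paper's own (one-line) proof simply cites Theorem \ref{Th:SetsAreMatrices} together with substitutivity of equality, and your two-direction argument is precisely the careful unpacking of that citation, with the forward direction additionally making explicit the use of the substitution rule of Remark \ref{Rem:SubstitutionRule} and the Reduction Axiom. Nothing to object to.
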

\begin{proof}
This follows from theorem \ref{Th:SetsAreMatrices} and substitutivity of equality. This means that in $\rm SMT|L'$ quantification over all matrices is equivalent to quantification over all sets.
\end{proof}

\begin{Theorem} \label{Th:ElementsAreSets}
$\vdash _{\rm SMT|L'} \forall x \forall \alpha (\alpha \in x \Rightarrow \exists y (y = \alpha))$
\end{Theorem}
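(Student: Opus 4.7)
The plan is to observe that this theorem is essentially an immediate corollary of Theorem \ref{Th:SetsAreMatrices}, so the main task is just to name the right inference and apply it cleanly. In the restricted system $\rm SMT|L'$, the only function symbol producing matrices is $f^1_1$, and the Reduction Axiom identifies $[x]$ with $x$; Theorem \ref{Th:SetsAreMatrices} has already exploited this to yield $\forall \alpha \exists! x(x = \alpha)$ as a theorem of $\rm SMT|L'$. In particular, the weaker statement $\forall \alpha \exists y (y = \alpha)$ is a theorem of $\rm SMT|L'$.

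From here I would just bolt on the hypothesis. First I would assume an arbitrary $x$ and an arbitrary matrix $\alpha$, and assume $\alpha \in x$. Applying universal instantiation to $\forall \alpha \exists y (y = \alpha)$ at our $\alpha$ yields $\exists y(y = \alpha)$, irrespective of whether the hypothesis $\alpha \in x$ is actually used. Then discharging the assumption by $\Rightarrow$-introduction gives $\alpha \in x \Rightarrow \exists y(y = \alpha)$, and two rounds of $\forall$-introduction (on $\alpha$ and $x$) deliver the claim.

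There is no real obstacle here: the result is a trivial weakening of Theorem \ref{Th:SetsAreMatrices}. Conceptually, however, it is worth flagging why the theorem is \emph{not} trivial in full SMT and is stated only for $\rm SMT|L'$. In full SMT, elements of sets can be genuine $m \times n$ set matrices with $mn > 1$, and by the Division Axiom Scheme such matrices are different from every $1 \times 1$ matrix, hence (via the Reduction Axiom) different from every set; so in full SMT the formula $\forall x \forall \alpha(\alpha \in x \Rightarrow \exists y(y = \alpha))$ is false. The point of restricting to $\rm L'$ is precisely to suppress the function symbols that generate these non-set matrices, which is why the short proof above goes through only in $\rm SMT|L'$.
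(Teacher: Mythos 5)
Your proof is correct and follows essentially the same route as the paper: both derive the claim as an immediate weakening of the second conjunct $\forall \alpha\, \exists! x\,(x = \alpha)$ of Theorem \ref{Th:SetsAreMatrices}, discharging the vacuous hypothesis $\alpha \in x$. Your added remark about why the statement fails in unrestricted SMT is accurate and consistent with the paper's later discussion, though not needed for the proof itself.
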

\begin{proof}
It has been demonstrated that $\vdash _{\rm SMT|L'} \forall \alpha \exists! x (x = \alpha)$ in theorem \ref{Th:SetsAreMatrices}. So in particular, expression \ref{Th:ElementsAreSets} follows.
\end{proof}


\begin{Proposition}(Relation between SMT and ZF)\rm:\\
The restriction of SMT to L' is a conservative extension of ZF.
\end{Proposition}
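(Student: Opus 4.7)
The plan is to prove the two standard conditions separately: (i) that $\mathrm{SMT}|\mathrm{L}'$ extends $\mathrm{ZF}$, i.e.\ every theorem of $\mathrm{ZF}$ is derivable in $\mathrm{SMT}|\mathrm{L}'$, and (ii) that the extension is conservative, i.e.\ every $\mathrm{L}_{\mathit{ZF}}$-sentence provable in $\mathrm{SMT}|\mathrm{L}'$ is already provable in $\mathrm{ZF}$. Both directions can be handled uniformly once one observes that, in the restricted language, Theorems~\ref{Th:SetsAreMatrices}--\ref{Th:ElementsAreSets} trivialize the distinction between sets and matrices.

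For (i), I would verify axiom by axiom that each $\mathrm{ZF}$ axiom is obtained from its ``generalized'' SMT-counterpart. Theorem~\ref{Th:Quantification} reduces every matrix quantifier to a set quantifier, and Theorem~\ref{Th:ElementsAreSets} guarantees that every element of a set is itself (identical to) a set. Consequently, Definition~\ref{Def:Subset} collapses to the ordinary $\mathrm{ZF}$ notion of subset, and the Generalized Extensionality, Pair, Sum Set, Power Set, Infinity, Separation, Substitution and Foundational axioms then yield their $\mathrm{ZF}$ statements verbatim; in particular, the restricted quantifications in the Generalized Sum Set and Power Set axioms become vacuous, because by Theorem~\ref{Th:ElementsAreSets} every element of any set is already a set. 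The Reduction Axiom together with $f^1_1$ contributes only the eliminable convention $[x]=x$.

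For (ii), I would define a syntactic translation $\tau\colon \mathrm{L}' \to \mathrm{L}_{\mathit{ZF}}$ that (a) renames every matrix variable to a fresh set variable and (b) rewrites every occurrence of $f^1_1(t)$ as $t$, which is legitimate by the Reduction Axiom. This translation fixes every $\mathrm{L}_{\mathit{ZF}}$-sentence. One then checks that $\tau$ sends every axiom of $\mathrm{SMT}|\mathrm{L}'$ to a $\mathrm{ZF}$-theorem and commutes with the logical inference rules, including the substitution rule~(\ref{eq:7}) which, after translation, reduces to the standard first-order substitution rule. Since after translation every matrix quantifier is a set quantifier and every $[\cdot]$ has been erased, each translated axiom becomes a tautology, a $\mathrm{ZF}$-axiom, or an immediate $\mathrm{ZF}$-consequence. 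A routine induction on the length of derivations then shows that $\mathrm{SMT}|\mathrm{L}' \vdash \phi$ implies $\mathrm{ZF} \vdash \tau(\phi)$; restricting to $\phi \in \mathrm{L}_{\mathit{ZF}}$ (where $\tau(\phi)$ is $\phi$) yields conservativity.

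The main obstacle is the careful treatment of the two axiom schemes (Separation and Substitution), since an instance $\Phi$ in $\mathrm{SMT}|\mathrm{L}'$ may contain matrix variables and occurrences of $f^1_1$. One must argue that the translated instance $\tau(\Phi)$ is an admissible instance of the corresponding $\mathrm{ZF}$-scheme, i.e.\ a genuine $\mathrm{L}_{\mathit{ZF}}$-formula with the right free variable and no residual matrix-sorted machinery. Because $\mathrm{L}'$ contains only the single function symbol $f^1_1$, which $\tau$ eliminates, no syntactic obstruction arises; the bookkeeping, however, is the part that must be spelled out with care, in particular to show that matrix-quantifiers introduced by uniqueness conditions (as in the Generalized Substitution Axiom) translate to genuine $\mathrm{ZF}$ uniqueness conditions over sets.
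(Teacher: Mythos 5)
Your proposal is correct and follows the same overall decomposition as the paper: extension is checked axiom by axiom using Theorems \ref{Th:Quantification} and \ref{Th:ElementsAreSets} to collapse matrix quantifiers and matrix-valued elements to sets, exactly as the paper does. The only substantive difference lies in the conservativity half: the paper simply asserts that $\mathrm{SMT}|\mathrm{L}'$ is ``equivalent to ZF in its original formulation plus the two new axioms for the new constants'' and implicitly invokes the standard metatheorem that extensions by definitions are conservative, whereas you construct the witnessing translation $\tau$ explicitly (erasing $f^1_1$ via the Reduction Axiom, renaming matrix variables to set variables) and run the induction on derivations. That is the standard proof of the very metatheorem the paper appeals to, so the two arguments rest on the same idea; your version buys a self-contained and checkable argument, and in particular it correctly isolates the one point the paper glosses over, namely that translated instances of the Separation and Substitution schemes must again be admissible ZF-instances --- which holds here precisely because $f^1_1$ is the only non-ZF symbol in $\mathrm{L}'$ and is eliminable. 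Your observation that the restricted quantifications in the Generalized Sum Set and Power Set axioms become vacuous under Theorem \ref{Th:ElementsAreSets} is also the needed (and in the paper unstated) step for the extension direction.
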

\begin{proof}
Using theorems \ref{Th:Quantification} and \ref{Th:ElementsAreSets}, one can easily prove that $\vdash _{\rm SMT|L'}A$ for every axiom $A$ of ZF. In other words, one can easily prove for every formula $\Psi$  of ZF that if $\vdash _{ZF} \Psi$, then $\vdash _{\rm SMT|L'}\Psi$. This shows that $\rm SMT|L'$ is an \emph{extension} of ZF. In addition, using theorem \ref{Th:Quantification} one can easily prove that $\rm SMT|L'$ is in fact equivalent to ZF in its original formulation plus the two new axioms (the one remaining axiom (\ref{eq:21}) of the Set Matrix Axiom Scheme \ref{Ax:SetMatrixScheme} and the Reduction Axiom \ref{Ax:Reduction}) for the new constants -- the $1 \times 1$ set matrices $[x]$. This shows that $\rm SMT|L'$ is a \emph{conservative} extension of ZF.
\end{proof}

\begin{Remark}\rm
The previous proposition proves that every set that can be constructed in ZF, can also be constructed in SMT. It should be noted, however, that (unrestricted) SMT is not an extension of ZF in the accepted sense of the word `extension', cf. \cite{bib:Shoenfield}. That is, it is not the case that every theorem of ZF is a theorem of SMT. For example, it is a theorem of ZF that there is precisely one set which has no sets as elements:
\begin{equation}
\vdash _{ZF}\exists! x \forall y (y \not \in x)
\end{equation}
In SMT, this theorem does not hold. For example, the set $\{ \left [ \begin{array}{c} \emptyset \\ \emptyset \end{array} \right ]\}$ has no sets as elements (because its element $\left [ \begin{array}{c} \emptyset \\ \emptyset \end{array} \right ]$ is not a set), but the set $\{ \left [ \begin{array}{c} \emptyset \\ \emptyset \end{array} \right ]\}$ is not identical to the empty set on account of the Generalized Extensionality Axiom for Sets \ref{Ax:Exp-ExtensionalitySets}. Thus, in SMT there are at least two sets that have no sets as elements, which proves that the aforementioned formula  $\exists! x \forall y (y \not \in x)$ does not hold in SMT. Thus, SMT is \emph{not} an extension of ZF.
\end{Remark}

\begin{Remark}\rm
Instead, SMT is to be viewed as a \emph{generalization} of ZF. A suggestion for a definition of this notion is the following: a theory $T'$ (in this case: SMT) is a generalization of a theory $T$ (in this case: ZF) if and only if the following conditions are satisfied:
\begin{enumerate}[(i)]
\item the language $L_{T'}$ for $T'$ is a proper extension of the language $L_T$ of $T$;
\item the universe for $L_{T'}$ properly contains the universe for $L_{T}$;
\item there is a language $L'$ (in our case $\rm L_{\it SMT}$ without the function symbols $f^n_i$ with $n > 1$), such that $L'$ is an extension of $L_T$ and $L_{T'}$ is an extension of $L'$, and such that the restriction of $T'$ to $L'$ is a conservative extension of $T$.
\end{enumerate}
These conditions are precisely satisfied in our case.
\end{Remark}

\subsection{Resolving the issues with the formalization of the EPT}

The main reason for introducing SMT is that it is more suitable as a mathematical foundation for the EPT than ZF. The complications that arise from a formalization of the EPT in the framework of ZF have been discussed in section 1. In this section it will be shown that these complications do not exist in the framework of SMT.

First of all, in the framework of SMT the $2 \times 1$ set matrices $\left [ \begin{array}{c} x \\ y \end{array} \right ]$ exist as such in the mathematical universe; on account of the Division Axiom Scheme these set matrices are not identical to any set. Therefore, such $2 \times 1$ set matrices can be used as \emph{direct} designators of components of the physical universe, consisting of a constituent of the world (designated by the entry $x$ in the first row) and a constituent of the antiworld (designated by the entry $y$ in the second row). Thus speaking, by taking SMT as the mathematical foundation for the EPT, the demand for the truth condition of knowledge can be met that entities that occur in the ontology for physics are to be designated by entities that occur as such in the ontology for mathematics: the complication that arises from a formalization of the EPT in the framework of ZF is thus absent in the framework of SMT.

Next, expressions of the type
$\left [ \begin{array}{c} a \\ b \end{array} \right ]: \left [ \begin{array}{c} f \\ g \end{array} \right ] \begin{array}{c} \rightarrow \\ \leftarrow \end{array} \left [ \begin{array}{c} x \\ y \end{array} \right ]$ can be formalized in the framework of SMT as a standard notation for a ternary relation $R$:
\begin{equation}\label{eq:24}
\left [ \begin{array}{c} a \\ b \end{array} \right ]:
\left [ \begin{array}{c} f \\ g \end{array} \right ]
\begin{array}{c} \rightarrow \\ \leftarrow \end{array}
\left [ \begin{array}{c} x \\ y \end{array} \right ]
\Leftrightarrow
\langle  \left [ \begin{array}{c} a \\ b \end{array} \right ], \left [ \begin{array}{c} f \\ g \end{array} \right ], \left [ \begin{array}{c} x \\ y \end{array} \right ] \rangle \in R
\end{equation}
Such a formalization is also possible in ZF, but the point is that the $2 \times 1$ set matrices $ \left [ \begin{array}{c} a \\ b \end{array} \right ]$, $\left [ \begin{array}{c} f \\ g \end{array} \right ]$, and $\left [ \begin{array}{c} x \\ y \end{array} \right ]$ are not identical to sets $S$, $T$, and $V$ in the framework of SMT because of the Division Axiom Scheme. Thus, in the framework of SMT one gets\footnote{Suppose that $S:T\rightleftarrows V$ can be deduced. Given expression (\ref{eq:24}), this would imply that $\langle S, T, V\rangle \in R$. But the set $R$ is well defined in \cite{bib:Cabb}: it only contains three-tuples of the form $\langle  \left [ \begin{array}{c} a \\ b \end{array} \right ], \left [ \begin{array}{c} f \\ g \end{array} \right ], \left [ \begin{array}{c} x \\ y \end{array} \right ] \rangle$. And given the Division Axiom Scheme \ref{Ax:DivisionScheme}, such a three-tuple is not identical to a three-tuple $\langle S, V, T\rangle$. Thus, $\langle S, V, T\rangle$ is not in $R$. Contradiction. Thus, $S:T\rightleftarrows V$ \emph{cannot} be deduced from the axioms of the EPT in the framework of SMT.}
\begin{equation}\label{eq:24a}
\left [ \begin{array}{c} a \\ b \end{array} \right ]:
\left [ \begin{array}{c} f \\ g \end{array} \right ]
\begin{array}{c} \rightarrow \\ \leftarrow \end{array}
\left [ \begin{array}{c} x \\ y \end{array} \right ]
\nvdash_{SMT}
S:T\rightleftarrows V
\end{equation}
Clearly, expression (\ref{eq:24a}) is in contrast with expression (\ref{eq:6a}): if SMT is thus taken as the mathematical foundation for the EPT, then the axiomatic system containing the EPT has no theorems of the type $S:T\rightleftarrows V$, which can \emph{not} be translated into a statement about the physical universe by the interpretation rules and are thus physically uninterpretable. This shows that the second complication that arises from a formalization of the EPT in the framework of ZF, is absent in the framework of SMT.

\subsection{Resolving infinities arising in separation and substitution}

\begin{Example}\label{Ex:InfinitySeparation}\rm
Let the set $N^* = N - \{0\} = \{1, 2, 3, \ldots \}$. Consider the sets $M_{m \times n}(N)$ given by
\begin{equation}\label{eq:25}
M_{m \times n}(N) = \{
    \left [ \begin{array}{ccc} x_{11} & \ldots & x_{1n} \\ \vdots &   & \vdots \\ x_{m1} & \ldots & x_{mn}\end{array} \right ]
    \ ; \ x_{11}, \ldots, x_{mn} \in N \}
\end{equation}
Let $\{M_{m \times n}(N) \ ; \ m, n \in N^* \}$ be the set of all these sets $M_{m \times n}(N)$, and let the set\linebreak $S = \bigcup_{m, n \in N^*}\{M_{m \times n}(N)\}$. In words, $S$ is the set of all matrices of all dimensions $m \times n$ with entries from the set of natural numbers $N$. It is then not possible to single out the subset $T$ of all matrices of all dimensions $m \times n$ with entries from the set $2N +1 = \{1, 3, 5, \ldots \}$ of odd natural numbers, by applying the Generalized Axiom Scheme of Separation only once to the set $S$. Namely, the formula
\begin{equation}
\begin{array}{lcr}
\exists y (\left [ \begin{array}{ccc} x_{11} & \ldots & x_{1n} \\ \vdots &   & \vdots \\ x_{m1} & \ldots & x_{mn}\end{array} \right ] \in y \Leftrightarrow\\
\ \ \ \left [ \begin{array}{ccc} x_{11} & \ldots & x_{1n} \\ \vdots &   & \vdots \\ x_{m1} & \ldots & x_{mn}\end{array} \right ] \in S \wedge
    x_{11}, \ldots, x_{mn} \in 2N+1)
\end{array}
\end{equation}
is not an instance of the Generalized Axiom Scheme of Separation if $m$ and $n$ are undetermined, because the term
$\left [ \begin{array}{ccc} x_{11} & \ldots & x_{1n} \\ \vdots &   & \vdots \\ x_{m1} & \ldots & x_{mn}\end{array} \right ]$
then can not be constructed using the clauses of definition \ref{Def:Syntax}. However, for every set $M_{m \times n}(N)$ defined by (\ref{eq:25}) the formula
\begin{equation}
\begin{array}{lcr}
\exists y(\left [ \begin{array}{ccc} x_{11} & \ldots & x_{1n} \\ \vdots &   & \vdots \\ x_{m1} & \ldots & x_{mn}\end{array} \right ] \in y \Leftrightarrow \\
\ \ \ \left [ \begin{array}{ccc} x_{11} & \ldots & x_{1n} \\ \vdots &   & \vdots \\ x_{m1} & \ldots & x_{mn}\end{array} \right ] \in M_{m \times n}(N) \wedge
    x_{11}, \ldots, x_{mn} \in 2N+1)
\end{array}
\end{equation}
is a well-formed formula for fixed $m$ and $n$, if $x_{11}, \ldots, x_{mn} \in 2N+1$ is as usual viewed as an abbreviation of $x_{11} \in 2N+1 \wedge \ldots \wedge x_{mn} \in 2N+1$. The set $y$ is then unique, and \linebreak $y = M_{m \times n}(2N+1)$. The requested subset $T$ of $S$ is then defined by
\begin{equation}
T = \bigcup_{m, n \in N^*}\{M_{m \times n}(2N+1)\}
\end{equation}
It thus takes infinitely many applications of the Generalized Axiom Scheme of Separation to construct the set $T$ as a subset of $S$.\hfill $\square$
\end{Example}

From the previous example it \emph{seems} that the Generalized Axiom Scheme of Separation has less power in the framework of SMT than its counterpart has in the framework of ZF, due to the occurrence of the metavariable $\Phi$. The point is that $\Phi$ represents a first order formula, and thus has to be finite: for a set with infinitely many different types of matrices as elements, it thus becomes impossible to formulate certain properties covering all elements in a single finite formula, purely because the starting set is made up of infinitely many different types of matrices.

The next proposition shows that this seeming loss of power does not lead to an incompleteness in the framework of SMT: it is merely the case that in the framework of SMT the Generalized Axiom of Separation has to be applied infinitely many times to single out certain subsets of sets, made up of infinitely many different types of set matrices. But that can still be done \emph{within} the framework.

\begin{Proposition}\label{Pr:InfiniteSeparationScheme}\rm Let $x$ be a set, made up of infinitely many different types of set matrices. Then any subset $y \subset x$, the construction of which would require an instance $\Phi(f^{n}_{i}(x_1, \ldots, x_n))$ of a well-formed formula $\Phi(\alpha)$ for infinitely many function symbols $f^n_i$, is constructible in the framework of SMT.
\end{Proposition}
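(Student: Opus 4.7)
The strategy is to decompose $x$ according to matrix type, apply the Generalized Axiom Scheme of Separation separately to each type using a finite formula, and then reassemble the pieces by union. I would split the argument into three stages.

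First, I would note that although $\Phi(\alpha)$ cannot be realized as a single finite formula of $\rm L_{\it SMT}$ covering all matrix types uniformly, for each fixed function symbol $f^{n}_{i}$ the substituted instance $\Phi(f^{n}_{i}(x_{1}, \ldots, x_{n}))$ \emph{is} a well-formed formula by Definitions \ref{Def:Symbols} and \ref{Def:Syntax}. Hence a single application of the Generalized Axiom Scheme of Separation \ref{Ax:Exp-SeparationScheme} to $x$ with this specific instance (existentially closed on the dummy entries $x_{1}, \ldots, x_{n}$) yields a unique subset $y^{n}_{i} \subseteq x$ consisting of precisely those elements of $x$ which are of type $f^{n}_{i}$ and satisfy the property. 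There are countably many such applications, one per function symbol in the enumeration of Definition \ref{Def:Symbols}(iii).

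Second, I would collect the family $\{y^{n}_{i}\}$ into a single set $\mathcal{Y}$ by applying the Generalized Substitution Axiom Scheme \ref{Ax:Exp-SubstitutionScheme} to a countable index set (existing by Axioms \ref{Ax:Exp-Infinity} and \ref{Ax:SetOfMatrices}), with the canonical enumeration of function symbols supplying the indexing relation. The desired set is then $y := \bigcup \mathcal{Y}$, whose existence is guaranteed by the Generalized Sum Set Axiom \ref{Ax:Exp-SumSet}. A direct verification then shows that $y$ coincides with the intended subset: every element of $y$ lies in some $y^{n}_{i} \subseteq x$ and satisfies the property, while every element of $x$ satisfying the property lies in the $y^{n}_{i}$ corresponding to its type.

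The main obstacle is the second stage. The Generalized Substitution Axiom Scheme demands a \emph{single} finite formula $\Psi(\langle n, i\rangle, \beta)$ expressing the assignment $\langle n, i\rangle \mapsto y^{n}_{i}$ uniformly in $(n,i)$, yet the very definition of $y^{n}_{i}$ references the specific function symbol $f^{n}_{i}$, of which there are infinitely many. The natural escape---foreshadowed by Example \ref{Ex:InfinitySeparation}---is to rewrite the defining condition so that $n$ and $i$ enter only as \emph{numerical parameters} feeding the uniform matrix-space constructor $M_{m \times n}(\cdot)$ of Axiom \ref{Ax:SetOfMatrices}, rather than as syntactic selectors of distinct function symbols. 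Carrying out this rewriting, and verifying that the resulting $\Psi$ is a well-formed finite formula and thus a legitimate instance of replacement, is where the delicate work of the proof will sit.
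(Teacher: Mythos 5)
Your proposal follows essentially the same route as the paper's proof: partition $x$ by matrix type (the paper does this in two Separation steps, first isolating the type-$f^{n}_{i}$ elements of $x$ as a subset $x^{n}_{i}$ and then cutting out $y^{n}_{i} \subseteq x^{n}_{i}$ with the instance $\Phi(f^{n}_{i}(x_1,\ldots,x_n))$; your one-step version is equivalent), then collect the $y^{n}_{i}$ via the Generalized Substitution Axiom Scheme over an index set $I$ of pairs $\langle n, i\rangle$, and finish with the Generalized Sum Set Axiom. The obstacle you flag in the second stage is genuine, but you should know that the paper does not resolve it either: its proof simply asserts that ``on account of the Generalized Substitution Axiom Scheme there is thus a set $\{y^{n}_{i} \mid \langle n, i\rangle \in I\}$'' without exhibiting the single finite formula $\Psi(\langle n, i\rangle, \beta)$ that the scheme requires -- which is exactly the uniformity problem the proposition was introduced to address, now reappearing one level up. Your explicit acknowledgement of where the delicate work sits is therefore a point in your favour rather than a defect relative to the paper; actually carrying out the rewriting through the $M_{m\times n}(\cdot)$ constructor (or showing it cannot be done) would go beyond what the paper itself provides.
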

\begin{proof}
To start with, for any set $x$, subsets $x^n_i$ of $x$ can be singled out on account of the Generalized Axiom Scheme of Separation:
\begin{equation}
\forall x \exists x^1_1 \forall \alpha (\alpha \in x^1_1 \Leftrightarrow \alpha \in x \wedge \exists x_1 (\alpha = [x_1]))
\end{equation}
\begin{equation}
\forall x \exists x^2_1 \forall \alpha (\alpha \in x^2_1 \Leftrightarrow \alpha \in x \wedge \exists x_1 \exists x_2 (\alpha = [x_1 \ \ x_2]))
\end{equation}
\begin{equation}
\forall x \exists x^2_2 \forall \alpha (\alpha \in x^2_2 \Leftrightarrow \alpha \in x \wedge \exists x_1 \exists x_2 (\alpha = \left [ \begin{array}{c} x_1 \\ x_2 \end{array} \right ]))
\end{equation}
and so forth. So, $x^1_1$ is the set of all sets in $x$, $x^2_1$ is the set of all matrices in $x$ of the form $[x_1 \ \ x_2]$, $x^2_2$ is the set of all matrices in $x$ of the form $\left [ \begin{array}{c} x_1 \\ x_2 \end{array} \right ]$, etc. Because the list of function symbols $f^n_i$ is complete, cf. remark \ref{Rem:ListExhaustive}, every element $\alpha$ of $x$ is in at least one such subset $x^n_i$, and because of the Division Axiom Scheme \ref{Ax:DivisionScheme} every element $\alpha$ of $x$ is in at most one such subset $x^n_i$.

Next, for every such subset $x^n_i$ of $x$ a subset $y^n_i$ can be singled out on account of the Generalized Axiom Scheme of Separation:
\begin{equation}
\forall x_1 \ldots \forall x_n (f^{n}_{i}(x_1, \ldots, x_n) \in y^n_i \Leftrightarrow f^{n}_{i}(x_1, \ldots, x_n) \in x^n_i \wedge \Phi(f^{n}_{i}(x_1, \ldots, x_n)))
\end{equation}
The point here is that the elements of $x^n_i$ all use the same function symbol $f^n_i$ so that a term $f^{n}_{i}(x_1, \ldots, x_n)$ occurring in $\Phi$ ranges over all elements of $x^n_i$.

Finally, let $I$ be the set of all indices $\langle n, i \rangle$ that occur in function symbols $f^n_i$; then for each element $\langle n, i \rangle$ of $I$ there is precisely one set $y^n_i$. On account of the Generalized Substitution Axiom Scheme \ref{Ax:Exp-SubstitutionScheme} there is thus a set $\{y^n_i | \langle n, i \rangle \in I \}$ made up of these sets $y^n_i$. On account of the Generalized Sumset Axiom there is thus a set $y$ that satisfies
\begin{equation}
y = \bigcup \{y^n_i | \langle n, i \rangle \in I \}
\end{equation}
This is precisely the set $y$ requested: this proves the proposition.
\end{proof}

Similarly, also with the Generalized Substitution Axiom Scheme \ref{Ax:Exp-SubstitutionScheme} a \emph{seeming} loss of power is connected, again because of the occurrence of the metavariable $\Phi$ for formulas. As the next example \ref{Ex:InfinitySubstitution} shows, for a set made up of infinitely many different types of matrices some functional ``relations'' would require infinitely many first-order formula $\Phi$, purely because the set is made up of infinitely many different types of matrices. The Generalized Substitution Axiom Scheme is then not directly applicable to construct the image of such a set under a function. However, as proposition \ref{Pr:InfiniteFunctionalRelation} will demonstrate, there is no incompleteness involved with the Generalized Substitution Axiom Scheme: an infinite scheme similar to that in Proposition \ref{Pr:InfiniteSeparationScheme} can be applied.

\begin{Example}\label{Ex:InfinitySubstitution}\rm
Consider once more the set $S$ of all matrices that can be constructed from the set $N$, cf. example \ref{Ex:InfinitySeparation}. Now for each matrix $\alpha$ in $S$, for which $\alpha = \left [ \begin{array}{ccc} t_{11} & \ldots & t_{1n} \\ \vdots &   & \vdots \\ t_{m1} & \ldots & t_{mn} \end{array} \right ]$, there is exactly one matrix $\beta = 2\alpha$ in $S$ for which $\beta = 2\cdot \left [ \begin{array}{ccc} t_{11} & \ldots & t_{1n} \\ \vdots &   & \vdots \\ t_{m1} & \ldots & t_{mn} \end{array} \right ] = \left [ \begin{array}{ccc} 2\cdot t_{11} & \ldots & 2\cdot t_{1n} \\ \vdots &   & \vdots \\ 2\cdot t_{m1} & \ldots & 2\cdot t_{mn} \end{array} \right ]$ where $2\cdot \left [ \begin{array}{ccc} x_{11} & \ldots & x_{1n} \\ \vdots &   & \vdots \\ x_{m1} & \ldots & x_{mn} \end{array} \right ] = \left [ \begin{array}{ccc} 2x_{11} & \ldots & 2x_{1n} \\ \vdots &   & \vdots \\ 2x_{m1} & \ldots & 2x_{mn} \end{array} \right ]$ for $x_{11}, \ldots, x_{mn} \in N$.
This functional relation, however, cannot be formalized in a finite formula $\Phi$, because there are infinitely many different types of matrices $\alpha$. Namely, the expression
\begin{equation}
\forall \alpha (\alpha = f^{n}_{i}(x_1, \ldots, x_n) \wedge x_{1} \in N \wedge \ldots \wedge x_{n} \in N \Rightarrow \exists! \beta (\beta = 2\alpha = f^{n}_{i}(2x_1, \ldots, 2x_n)))
\end{equation}
is not a well-formed formula because $n$ and $i$ are undefined: this expression can thus not appear in an instance of the Generalized Substitution Axiom Scheme. \hfill $\Box$
\end{Example}

\begin{Proposition}\label{Pr:InfiniteFunctionalRelation}\rm
Let $x$ be set, made up of infinitely many different types of set matrices. Then any image set $y$, the construction of which would require an instance \linebreak $\Phi(f^{n}_{i}(x_1, \ldots, x_n), f^{m}_{j}(y_1, \ldots, y_m))$ of a well-formed functional relation $\forall \alpha \in x \exists! \beta \Phi(\alpha, \beta)$ for infinitely many combinations of function symbols $\langle f^n_i, f^m_j \rangle$ (with at most one such combination for each function symbol $f^n_i$), is constructible in the framework of SMT.
\end{Proposition}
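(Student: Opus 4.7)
The plan is to mirror the proof of Proposition \ref{Pr:InfiniteSeparationScheme}, with the Generalized Substitution Axiom Scheme \ref{Ax:Exp-SubstitutionScheme} taking the role that Separation played there. First I would partition $x$ according to the outermost function symbol of each element: for each pair $\langle n, i \rangle$, the Generalized Axiom Scheme of Separation yields a subset $x^n_i \subseteq x$ consisting of exactly those $\alpha \in x$ of the form $f^n_i(x_1, \ldots, x_n)$. By the exhaustiveness of the function symbol list (Remark \ref{Rem:ListExhaustive}) together with the Division Axiom Scheme \ref{Ax:DivisionScheme}, the family $\{x^n_i\}$ partitions $x$, and only countably many of the $x^n_i$ are nonempty.

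Second, because the hypothesis pairs each source function symbol $f^n_i$ with at most one target function symbol $f^m_j$, the restriction of the functional relation to $x^n_i$ is expressible by a single finite formula $\Phi^{n,i}(\alpha, \beta)$, namely the conjunction that $\alpha = f^n_i(x_1, \ldots, x_n)$, $\beta = f^m_j(y_1, \ldots, y_m)$, and the coordinate-wise clauses determined by the relation hold. The restricted relation $\forall \alpha \in x^n_i \exists! \beta \, \Phi^{n,i}(\alpha, \beta)$ is then a legitimate instance of the Generalized Substitution Axiom Scheme, and applying that scheme to $x^n_i$ with $\Phi^{n,i}$ yields the image set $y^n_i$.

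Third, let $I$ be the set of indices $\langle n, i \rangle$ appearing in the function symbols; since $I \subseteq N \times N$, it exists in SMT. Exactly as in Proposition \ref{Pr:InfiniteSeparationScheme}, a further application of the Generalized Substitution Axiom Scheme produces the set $\{ y^n_i \mid \langle n, i \rangle \in I \}$, and the Generalized Sum Set Axiom \ref{Ax:Exp-SumSet} then yields $y = \bigcup \{ y^n_i \mid \langle n, i \rangle \in I \}$. By construction, $y$ contains precisely the $\Phi$-images of the elements of $x$, which is the required set.

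The principal obstacle is justifying the third step: one has to exhibit a single finite formula $\Psi(\langle n, i \rangle, y^n_i)$ capturing the relation ``the image set associated with the index $\langle n, i \rangle$ is $y^n_i$'', so that the Generalized Substitution Axiom Scheme is legitimately applicable to collect the $y^n_i$ into one set. This is the same maneuver that was implicitly invoked in Proposition \ref{Pr:InfiniteSeparationScheme}; once one has granted it there, it carries over here without additional conceptual difficulty, and the rest of the argument is mechanical bookkeeping using the axioms already in play.
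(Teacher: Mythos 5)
Your proof follows essentially the same route as the paper's: partition $x$ into the subsets $x^n_i$ by outermost function symbol via Separation, apply the Generalized Substitution Axiom Scheme to each $x^n_i$ with the single-symbol instance of $\Phi$ to obtain $y^n_i$, collect the $y^n_i$ over the index set $I$ by another application of Substitution, and take the union via the Generalized Sum Set Axiom. Your closing remark correctly identifies the one delicate point (exhibiting a single finite formula indexing $\langle n, i\rangle \mapsto y^n_i$ for the collecting step), which the paper's own proof passes over just as quickly by appeal to the analogous step in Proposition \ref{Pr:InfiniteSeparationScheme}.
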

\begin{proof}
To start with, for any set $x$, subsets $x^n_i$ of $x$ can be singled out on account of the Generalized Axiom Scheme of Separation; see proposition \ref{Pr:InfiniteSeparationScheme}. Next, for every such subset $x^n_i$ of $x$ an image set $y^n_i$ can be constructed on account of the Generalized Substitution Axiom Scheme: this set $y^n_i$ is the collection of matrices $f^{m}_{j}(y_1, \ldots, y_m)$, for which there is an element $f^{n}_{i}(x_1, \ldots, x_n) \in x^n_i$ such that $\Phi(f^{n}_{i}(x_1, \ldots, x_n), f^{m}_{j}(y_1, \ldots, y_m))$ -- note that $\Phi$ is assumed to be a functional relation. Again on account of the Generalized Substitution Axiom Scheme \ref{Ax:Exp-SubstitutionScheme} there is thus a set $\{y^n_i | \langle n, i \rangle \in I \}$ made up of these sets $y^n_i$, where the set $I$ is defined as in proposition \ref{Pr:InfiniteSeparationScheme}. On account of the Generalized Sumset Axiom there is thus a set $y$ that satisfies
\begin{equation}
y = \bigcup \{y^n_i | \langle n, i \rangle \in I \}
\end{equation}
This is precisely the image set $y$ requested: this proves the proposition.
\end{proof}

\section{Conclusions}

The first conclusion is that SMT is better suited than ZF as a foundation for mathematics in the research program on the EPT. It has been shown that a formalization of the EPT in the framework of ZF leads to unsolvable complications such as weakly surplus structure, and it has been demonstrated that SMT resolves these complications by expanding the ontological repertoire of the language of mathematics in the sense that set matrices exist as such in the mathematical universe of SMT. 

The second conclusion is that SMT is not better suited than ZF as a foundation for mathematics from the point of view of pure mathematics. It is true that SMT is not weaker than ZF, as is demonstrated by the fact that a restriction of SMT is a conservative extension of ZF: all sets that can be constructed with ZF can thus also be constructed with SMT. It is also true that SMT yields an incremental improvement, because \emph{n}-ary structures fit more elegantly in the ontology corresponding with the framework of SMT than in the ontology corresponding with the framework of ZF. That is, binary structures such as groups and topological spaces are simply $1 \times 2$ set matrices in the framework of SMT, ternary structures such as fields are $1 \times 3$ set matrices, etc.: it is not the case that these structures can \emph{not} be represented in the framework of ZF, but their representation is less elegant. A group, for example, is a two-tuple $\langle G, * \rangle$ so that strictly speaking a group is a set $\{\{G\}, \{G, *\}\}$ in the framework of ZF; a $1 \times 2$ set matrix $[G \ \ *]$ is then a more elegant representation of a group than this set $\{\{G\}, \{G, *\}\}$. The crux, however, is that SMT does not make the foundation of mathematics in itself more powerful: the basic set-theoretical questions, that are unsolvable in ZF, remain unsolved in SMT.

The bottom line is that SMT is \emph{preferred} over ZF in the research program on the EPT. The motivation for this preference is also used elsewhere: Teller argues in \cite{bib:Teller} that Fock space is preferred as the Hilbert space for Quantum Field Theory, because it eliminates the problem of weakly surplus structure that arises when a tensor product of one-quantum Hilbert spaces is used to describe two-particle states. Currently the scope of the present results is limited to the research program on the EPT, but as it concerns the very foundations of mathematics, the inappropriateness of ZF for the formalization of the EPT may have implications beyond its current limit in the eventuality that the research program on the EPT supersedes the other research programs in physics.

\paragraph{Acknowledgements.}
We wish to thank Sergey Sannikov, Institute of Theoretical Physics, Kharkov Institute of Physics and Technology (deceased the $25^{\rm th}$ of March, 2007), for his contribution to the development of SMT. We are also grateful to two anonymous referees who by their critical remarks enabled us to improve the paper considerably.
This work was financially supported by PlusPunt Eindhoven B.V., grant $\#823353$. Additional financial support was given by the Foundation Liberalitas.

\bibliographystyle{abbrv}
\bibliography{main}

\begin{thebibliography}{10}
\bibitem{bib:Cabb}
{\sc Cabbolet},~M.J.T.F.: Elementary Process Theory: a Formal Axiomatic System With a Potential Application as a Foundational Framework for Physics Supporting Gravitational Repulsion of Matter and Antimatter, \emph{Ann. Phys. (Berlin)}, \textbf{522}(10), 699-738 (2010)
\bibitem{bib:Cabb2}
{\sc Cabbolet},~M.J.T.F.: Addendum to the Elementary Process Theory, \emph{Ann. Phys. (Berlin)}, \textbf{523}(12), 990-994 (2011)
\bibitem{bib:Cabb3}
{\sc Cabbolet},~M.J.T.F.: Elementary Process Theory: axiomatic introduction and applications. Dissertation, Vrije Universiteit Brussel (2011)
\bibitem{bib:Feyn}
{\sc Feynman},~P.R.: The Theory of Positrons, \emph{Phys. Rev.}, \textbf{76}, 749-759 (1949)
\bibitem{bib:Dalen}
{\sc Dalen},~D.~van, H.C.~{\sc Doets}, and H.C.M.~{\sc de~Swart}:
Sets: Naive, Axiomatic and Applied. Pergamon Press, Oxford (1978)
\bibitem{bib:Redhead}
{\sc Redhead},~M.L.G.: Symmetry in Intertheory Relations, \emph{Synthese} \textbf{32}, 77-112 (1975)
\bibitem{bib:Shoenfield}
{\sc Shoenfield},~J.R.: Mathematical Logic, AK Peters Ltd., Natick (2001)
\bibitem{bib:Teller}
{\sc Teller},~P.: An Interpretive Introduction to Quantum Field Theory. Princeton University Press, Princeton (1997)
\end{thebibliography}

\end{document}